\documentclass[11pt]{amsart}
\usepackage{amssymb, amsthm, amscd, epsfig, graphicx,
enumerate, stmaryrd, amsmath, graphs, booktabs, hhline, xcolor, mathtools, float} 

%

\textwidth=15cm
\textheight=22cm
\hoffset=-1.5cm
\voffset=-1cm


\newtheorem{thm}{Theorem}[section]
\newtheorem{cor}[thm]{Corollary}
\newtheorem{lem}[thm]{Lemma}
\newtheorem{prop}[thm]{Proposition}

\theoremstyle{definition}
\newtheorem{defn}[thm]{Definition}

\theoremstyle{remark}

\numberwithin{equation}{section}

\mathsurround=1pt
\setlength{\parindent}{2em}
\setlength{\parskip}{0ex}

%
%

%
%

\newcommand{\al}{\alpha}
\newcommand{\be}{\beta}
\newcommand{\ga}{\gamma}

\newcommand{\de}{\delta}

\newcommand{\la}{\lambda}

\newcommand{\va}{\varphi}

%
%
\newcommand{\x}{\times}

\newcommand{\R}{\mathbb R}

\newcommand{\del}{\partial}

\newcommand{\co}{\colon\thinspace}

%
%

\begin{document}
\mathsurround=1pt 
\title{Some approximate solutions to  the spread of influenza A virus infection}

\subjclass[2010]{Primary 35B40;  Secondary 35Q92.}

\keywords{PDE, virus infection, dynamical system.}


\author{Boldizs\'ar Kalm\'{a}r}


\email{boldizsar.kalmar@gmail.com}

\begin{abstract}
We study differential equations describing the spread of influenza A virus infection based on a mathematical model.
 We look for surface trajectories of the dynamical system in hand and their asymptotic behaviour. 
\end{abstract}

\maketitle


\section{Introduction}

We consider the system of differential equations
\begin{equation}\label{egyenlet0}
\begin{array}{lll}
  \del_t  T(x, t)  & = &   -\be T(x, t) V(x, t) \\
   \del_t E_1(x, t)  & = &  \be T(x, t) V(x, t) - \frac{n_E}{\tau_E} E_1(x, t) \\
   \del_t E_i(x, t) & = &  \frac{n_E}{\tau_E} E_{i-1}(x, t) -  \frac{n_E}{\tau_E} E_{i}(x, t) \\
   \del_t I_1(x, t) & = &  \frac{n_E}{\tau_E} E_{n_E}(x, t) - \frac{n_I}{\tau_I} I_1(x, t) \\
   \del_t I_j(x, t)  & =  &  \frac{n_I}{\tau_I} I_{j-1}(x, t) -  \frac{n_I}{\tau_I} I_{j}(x, t) \\
   \del_t V(x, t)  &  =   &   p\sum_{j = 1}^{n_I}  I_j (x, t) - cV(x, t) + D_{PCF} a + v_a W(x, t) \\
   \del_x V(x, t) & = &  W(x, t)
 \end{array}
\end{equation}
where $i = 2, \ldots, n_E$ and $j = 2, \ldots, n_I$.
We replace the value $\del_{xx} V(x, t)$ in the model \cite{BQRY20} 
by the constant $a$, so in this paper 
we  deal with only the case of constant $\del_{xx} V(x, t)$. 
  The constants 
$D_{PCF}$ and $v_a$ correspond to 
the diffusion rate and to the upward transport
in the periciliary fluid. In the equations 
$c$ denotes the viral clearance rate,
$p$ denotes the virion producing rate and $T$ denotes the fraction of uninfected target cells.
We denote by $n_I$ and $n_E$ the numbers of age classes of infectious and eclipse phases, see \cite{BQRY20}. 

In the present paper, in the case of a system of equations with $n_E = 0$, 
we propose a heuristic which shows  that around 
$T(x, t) = \frac{c}{\tau_I p \be}$ the system changes its behaviour. 
We obtain that if $\tau_I p \be T > c$, then 
the trajectories tend to have asymptotic behaviour while for 
$\tau_I p \be T < c$ they do not. 
Denote by 
$c_E$ and $c_I$ the values $\frac{n_E}{\tau_E}$ and $\frac{n_I}{\tau_I}$.

To solve the equations, 
we are looking for the smooth functions
$$T, E_i, I_j, V, W \co \R^2 \to \R.$$ 
This means we are looking for 
 a map
$$
S \co 
\left[
\begin{array}{ccc}
  x    \\
   t  
\end{array}
\right]
 \mapsto 
 \left[
 \begin{array}{ccc}
 T(x, t)    \\
 E_1(x, t)   \\
 \vdots \\
  E_{n_E}(x, t)   \\
I_1(x, t) \\
\vdots \\
 I_{n_I}(x, t)   \\
 V(x, t) \\
 W(x, t)
\end{array}
\right]\mbox{, \ \ \ \ \ }
S' (x, t) = 
\left[
\begin{array}{ccc}
 \del_x T(x, t)  & \del_t T(x, t)  \\
 \del_x E_1(x, t)  &   \del_t E_1(x, t)   \\
 \vdots & \vdots \\
  \del_x E_{n_E}(x, t)  &   \del_t E_{n_E}(x, t)  \\
 \del_x I_1(x, t)  &   \del_t I_1(x, t)   \\
 \vdots & \vdots \\
  \del_x I_{n_I}(x, t)  &   \del_t I_{n_I}(x, t)  \\
 \del_x V(x, t) & \del_t V(x, t) \\
  \del_x W(x, t) & \del_t W(x, t)
\end{array}
\right]
$$
such that at every $(x_0, t_0)$
the graph of the linear map
$
(x, t) \mapsto S' (x_0, t_0) \cdot
 \left[
\begin{array}{ccc}
  x    \\
   t 
\end{array}
\right]
$
is the same as 
the 
graph of the linear map
$$
\Phi_{T, E_1, \ldots, E_{n_E},  I_1, \ldots, I_{n_I},  V, W} \co 
 \left[
\begin{array}{ccc}
  x    \\
   t 
\end{array}
\right] \mapsto 
\left[
\begin{array}{ccc}
\al &   -\be TV    \\
 \ga_0 &   \be TV - c_E E_1    \\
\ga_1 &  c_E E_{1} - c_E E_{2}    \\
 \vdots & \vdots \\ 
 \ga_{n_E-1}  &  c_E E_{n_E-1} - c_E E_{n_E}    \\
\de_0  &   c_E E_{n_E} - c_I I_1   \\
\de_1 &    c_I I_{1} - c_I I_{2}   \\
    \vdots & \vdots \\
\de_{n_I-1}  &    c_I I_{n_I-1} - c_I I_{n_I}   \\
 W  &    p\sum_{j = 1}^{n_I}  I_j  - cV + D_{PCF} a + v_a W  \\
    a  &    \psi 
\end{array}
\right] \cdot 
 \left[
\begin{array}{ccc}
  x    \\
   t 
\end{array}
\right]
$$
for all the points $$(T, E_1, \ldots, E_{n_E},  I_1, \ldots, I_{n_I},  V, W ) \in \R^{n_E + n_I + 3},$$ where
$$
\al, \ga_0, \ga_1, \ldots, \ga_{n_E-1},  \de_0, \ldots, \de_{n_I-1}, \psi  \co (T, E_1, \ldots, E_{n_E},  I_1, \ldots, I_{n_I},  V, W)  \to \R$$
are
some given smooth functions. 

If the linear maps 
$$\Phi_{T, E_1, \ldots, E_{n_E},  I_1, \ldots, I_{n_I},  V, W}$$
have rank two, then their graphs 
form a $2$-distribution 
in $\R^{n_E + n_I + 3}$ and we are looking for integral surfaces (graphs of the solutions $S$) of this distribution.

\section{Preliminaries}

%
%
Let us partition the space  $\R^{n_E + n_I + 3}$
according to different values of
the first $T$ coordinate 
into hyperplanes.
Then for each $T \in \R$
consider
the  map
$$
\va \co 
\left[
\begin{array}{ccc}
  E_1 \\
   \vdots \\ 
   \vdots \\
   E_{n_E} \\ 
    I_1 \\ 
    \vdots \\ 
    \vdots \\
    I_{n_I} \\  
    V \\ 
    W
 \end{array}
\right]  \mapsto   
\left[
\begin{array}{ccccc|cccccc}
  -c_E   &  0 & \cdots  &  \cdots & 0 & 0 & \cdots & \cdots & 0 &  \be T & 0  \\
c_E   &  -c_E & 0  & \cdots &   0 & 0 & \cdots & \cdots & 0 &  0 & 0  \\
\vdots & & & & & & & & &  & \vdots  \\
0   &  \cdots & 0 & c_E   &  -c_E  & 0 & \cdots &  \cdots & \cdots & 0 & 0  \\
0   &  \cdots & 0  &   0 & c_E & -c_I &  0 & \cdots & \cdots & \cdots & 0  \\
0   &  \cdots & 0  &   0 & 0 & c_I &  -c_I & 0 & \cdots &\cdots & 0  \\
\vdots & & & & \vdots  & & & & &  & \vdots  \\
0   &  \cdots & \cdots  &   \cdots & 0 & \cdots & 0 & c_I &  -c_I & 0  & 0 \\
0   &  \cdots & 0  &   0 & \cdots & p & \cdots &  \cdots & p & -c  & v_a \\
  0   &  \cdots & \cdots  &   \cdots & \cdots & \cdots &  \cdots & \cdots & \cdots & \cdots & 0
 \end{array}
\right] \cdot 
\left[
\begin{array}{ccc}
 E_1 \\
   \vdots \\ 
   \vdots \\
   E_{n_E} \\ 
   \hline
    I_1 \\ 
    \vdots \\
    \vdots \\  
    I_{n_I} \\  
    V \\ 
    W
\end{array}
\right]
$$
defined by the multiplication with this   coefficient matrix   $A \in \R^{(n_E + n_I + 2) \x (n_E + n_I + 2) }$. 
 Then the second column in $\R^{(n_E + n_I + 3)}$ of the coefficient matrix of the linear map 
$$
\Phi_{T, E_1, \ldots, E_{n_E},  I_1, \ldots, I_{n_I},  V, W}
$$
can be obtained by extending the vector
$$
 \va (E_1, \ldots, E_{n_E},  I_1, \ldots, I_{n_I},  V, W) + (0, \ldots, 0, D_{PCF} a, \psi)
$$
by the first coordinate $-\be TV$.


For example, the first column of the matrix of the map 
$
\Phi_{T, E_1, \ldots, E_{n_E},  I_1, \ldots, I_{n_I},  V, W}
$
can be chosen to be 
$$
[ -r_1 W, r_2 W, \ldots, r_{n_E + n_I + 2} W, a],
$$
where $r_i > 0$  are constants and $r_{n_E + n_I + 2} = 1$.

These two column vectors  define two vector fields in $\R^{(n_E + n_I + 3)}$. If these are linearly independent, then 
we have a $2$-distribution and if this distribution is involutive, then there are integral surfaces for it. These surfaces are the trajectories
of the differential equation.

\begin{lem}
We have 
$$
 (-1)^{n_E + n_I}  \det(A - \la I) = 
 (  c_E  + \la)^{n_E} (c_I + \la)^{n_I}(c + \la)\la + \be T c_E^{n_E}p \left(c_I^{n_I} - (c_I + \la)^{n_I} \right)
 $$
 for the characteristic polynomial of $A$.
\end{lem}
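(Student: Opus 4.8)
The matrix $A$ is block lower-bidiagonal except for two couplings: the entry $\be T$ in the first row (position $(1, n_E+n_I+1)$, the $V$-column) and the row of $p$'s feeding into the $V$-equation, plus the $v_a$ in the $V$-row and an all-zero last row ($W$-row). The plan is to compute $\det(A-\la I)$ by cofactor expansion along the last row. Since the last row of $A$ is identically zero, the last row of $A-\la I$ is $(0,\ldots,0,-\la)$, so $\det(A-\la I) = -\la \cdot \det(B - \la I)$, where $B \in \R^{(n_E+n_I+1)\times(n_E+n_I+1)}$ is $A$ with its last row and column deleted. This already extracts the factor $\la$ and reduces the problem to the matrix $B$ governing the $E_i$, $I_j$, and $V$ variables.

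Next I would compute $\det(B-\la I)$ by expanding along the last column of $B-\la I$, which is $(\,0,\ldots,0,\,\be T,\,0,\ldots,0,\,-c-\la)^{\mathsf T}$: it has the entry $\be T$ in the row indexed by $E_1$, the entry $-c-\la$ on the diagonal, and zeros elsewhere. This gives
\begin{equation*}
\det(B-\la I) = (-c-\la)\det(M_E \oplus M_I - \la I) \;+\; (-1)^{?}\,\be T\cdot(\text{a minor}),
\end{equation*}
where $M_E$ is the $n_E\times n_E$ lower-bidiagonal block with $-c_E$ on the diagonal and $c_E$ subdiagonal, and $M_I$ is the analogous $n_I\times n_I$ block with $c_I$. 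The first term contributes $(-c-\la)(-c_E-\la)^{n_E}(-c_I-\la)^{n_I}$ since bidiagonal determinants are products of diagonal entries. For the second term, the relevant minor is obtained by deleting the $E_1$-row and the $V$-column from $B-\la I$; after this deletion one is left with an upper-triangular-ish matrix whose only surviving off-bidiagonal entries are the $p$'s in the $V$-row. Expanding that minor should produce $c_E^{n_E}$ (from the chain $E_2\to\cdots\to E_{n_E}\to I_1$, all contributing $c_E$'s and then the $c_E$ linking $E_{n_E}$ to $I_1$) times a factor involving how the $p$'s couple into the telescoping $I$-chain.

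The one genuinely delicate computation is this last minor, because the row of $p$'s means it is \emph{not} triangular. I would handle it by a further expansion along the $p$-row, or — cleaner — by row-reducing the $I$-block: add suitable multiples of the $I_1,\ldots,I_{n_I}$ rows to the $V$-row to eliminate the $p$'s, tracking the determinant. Because the $I$-block is lower-bidiagonal with diagonal $-c_I-\la$ and subdiagonal $c_I$, eliminating $p\cdot I_j$ for $j=n_I$ down to $1$ replaces the $(V,I_1)$ entry by something proportional to $p\big(c_I^{n_I}-(c_I+\la)^{n_I}\big)/(\text{power of }(c_I+\la))$, and the denominators cancel against the diagonal entries consumed. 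This is exactly where the combination $c_I^{n_I}-(c_I+\la)^{n_I}$ in the claimed formula comes from: it is $\sum_{k=0}^{n_I-1}(-1)^{?}(c_I+\la)^{k}c_I^{n_I-1-k}$ up to sign, the telescoping sum generated by back-substitution through the $I$-chain. Assembling the two contributions, pulling the overall sign $(-1)^{n_E+n_I}$ to the left, and simplifying $(-1)$'s should yield precisely
\begin{equation*}
(-1)^{n_E+n_I}\det(A-\la I) = (c_E+\la)^{n_E}(c_I+\la)^{n_I}(c+\la)\la + \be T\, c_E^{n_E}\, p\big(c_I^{n_I}-(c_I+\la)^{n_I}\big).
\end{equation*}
I expect the bookkeeping of signs across the two expansions (one along a zero row, one along a sparse column, one along the $p$-row or the row-reduction) to be the main place an error could creep in, so I would double-check by testing small cases $n_E=n_I=1$ directly against the $3\times 3$ (or $4\times 4$ with the $W$-row) determinant.
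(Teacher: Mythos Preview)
Your plan is correct and follows essentially the same strategy as the paper: cofactor expansion along the sparse rows and columns of $A-\la I$, with the telescoping sum through the $I$-block producing the factor $c_I^{n_I}-(c_I+\la)^{n_I}$. The only difference is the order of expansion: the paper expands along the \emph{first} row first (separating the diagonal product $(-c_E-\la)^{n_E}(-c_I-\la)^{n_I}(-c-\la)(-\la)$ from the $\be T$-minor, in which the $c_E^{n_E}$ factor falls out because the deletion shifts the $c_E$ subdiagonal onto the diagonal), then extracts $(-\la)$ from the last row of that minor, and finally handles the $p$-row by a first-column recursion $\det B_k = c_I\det B_{k-1} + (-1)^{k+1}p(-c_I-\la)^{k-1}$ rather than by row reduction; you instead peel off $(-\la)$ first and the $V$-column second. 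Both orderings lead to the same geometric sum $\sum_{j=0}^{n_I-1} c_I^{j}(c_I+\la)^{n_I-1-j}$, and your suggestion to verify the signs on $n_E=n_I=1$ is sensible since the paper's recursion makes the sign tracking slightly cleaner than your column-then-row route.
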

\begin{proof}
We compute the determinant of 
$$
\left[
\begin{array}{ccccc|cccccc}
  -c_E  - \la &  0 & \cdots  &  \cdots & 0 & 0 & \cdots & \cdots & 0 &  \be T & 0  \\
c_E   &  -c_E - \la & 0  & \cdots &   0 & 0 & \cdots & \cdots & 0 &  0 & 0  \\
\vdots & & & & & & & & &  & \vdots  \\
0   &  \cdots & 0 & c_E   &  -c_E - \la & 0 & \cdots &  \cdots & \cdots & 0 & 0  \\
0   &  \cdots & 0  &   0 & c_E & -c_I -\la &  0 & \cdots & \cdots & \cdots & 0  \\
0   &  \cdots & 0  &   0 & 0 & c_I &  -c_I -\la & 0 & \cdots &\cdots & 0  \\
\vdots & & & & \vdots  & & & & &  & \vdots  \\
0   &  \cdots & \cdots  &   \cdots & 0 & \cdots & 0 & c_I &  -c_I -\la & 0  & 0 \\
0   &  \cdots & 0  &   0 & \cdots & p & \cdots &  \cdots & p  & -c -\la  & v_a \\
  0   &  \cdots & \cdots  &   \cdots & \cdots & \cdots &  \cdots & \cdots & \cdots & 0 & -\la
 \end{array}
\right] 
$$
It is equal to 
$$
(  -c_E  - \la)^{n_E} (-c_I - \la)^{n_I}(-c - \la)(-\la)
+ (-1)^{n_E + n_I}\be T c_E^{n_E} \det B,
$$
where
$$B =
\left[
\begin{array}{cccccc}
 c_I &  -c_I -\la & 0 & \cdots  & 0  \\
 0 & c_I & -c_I -\la & 0  & 0 \\
 \vdots & & & &   \vdots  \\
 0 & \cdots & c_I &  -c_I -\la   & 0 \\
 p & \cdots &  \cdots & p & v_a \\
  0 &  \cdots & \cdots & 0 & -\la
 \end{array}
\right] 
$$
and 
$$
\det B = 
(-\la) \det B_{n_I}\mbox{, \ \ \ \ }
B_{k} =  \left[
\begin{array}{cccccc}
 c_I &  -c_I -\la & 0 & \cdots    \\
 0 & c_I & -c_I -\la & 0   \\
 \vdots & & &   \\
 0 & \cdots & c_I &  -c_I -\la    \\
 p & \cdots &  \cdots & p   
 \end{array}
\right]
$$
where $3 \leq k \leq n_I$. 
We have 
$$
\det B_{k} = c_I \det B_{k-1} + (-1)^{k + 1}p  (-c_I - \la)^{k - 1}
$$
for $k \geq 3$ and
$$
\det B_2 = \det 
\left[
\begin{array}{cccccc}
  c_I &  -c_I -\la    \\
  p & p 
 \end{array}
\right] = c_I p   - p (-c_I - \la). 
$$
Hence 
\begin{multline*}
\det B = (-\la) \left(c_I\det B_{n_I-1} + (-1)^{n_I + 1}p  (-c_I - \la)^{n_I - 1}\right) = \\
(-\la) \left( c_I \left( c_I \det B_{n_I-2} + (-1)^{n_I}p  (-c_I - \la)^{n_I - 2}\right) + (-1)^{n_I + 1}p  (-c_I - \la)^{n_I - 1}\right) = \\
(-\la) \left( c_I^2 \det B_{n_I-2} + c_I (-1)^{n_I}p  (-c_I - \la)^{n_I - 2} +  (-1)^{n_I + 1}p  (-c_I - \la)^{n_I - 1} \right) = \\
(-\la) (  c_I^3 \det B_{n_I-3} + c_I^2 (-1)^{n_I - 1}p  (-c_I - \la)^{n_I - 3}+ c_I (-1)^{n_I}p  (-c_I - \la)^{n_I - 2} + \\ (-1)^{n_I + 1}p  (-c_I - \la)^{n_I - 1} ) =
\cdots  = \\ (-\la)\left(  c_I^{n_I- 2} \det B_2 +  c_I^{n_I- 3}  (-1)^{4}p  (-c_I - \la)^{2}  + \cdots +  (-1)^{n_I + 1}p  (-c_I - \la)^{n_I - 1}   \right) = \\
 (-\la)\left(  c_I^{n_I- 1} p  + c_I^{n_I- 2} p  (c_I  +\la) +  c_I^{n_I- 3} p  (c_I + \la)^{2}  + \cdots + p  (c_I + \la)^{n_I - 1}   \right).
\end{multline*}

This means that 
\begin{multline*}
(-1)^{n_E + n_I}  \det(A - \la I) = 
(  c_E  + \la)^{n_E} (c_I + \la)^{n_I}(c + \la)\la
+ \\ \be T c_E^{n_E}  p(-\la)(  c_I^{n_I- 1}  + c_I^{n_I- 2}  (c_I  +\la) +  c_I^{n_I- 3}  (c_I + \la)^{2}  + \cdots \\+  (c_I + \la)^{n_I - 1}   )= \\ 
 (  c_E  + \la)^{n_E} (c_I + \la)^{n_I}(c + \la)\la + \be T c_E^{n_E}p (-\la) \sum_{j = 0}^{n_I - 1} c_I^j (c_I + \la)^{n_I- 1 - j}.
\end{multline*}
\end{proof}

%

\begin{cor}\label{kov2}
If $n_E = 0$, then
$$
(-1)^{n_I}  \det(A - \la I) = 
(c_I + \la)^{n_I}(c + \la)\la + \be T p \left(c_I^{n_I} - (c_I + \la)^{n_I} \right).
 $$
\end{cor}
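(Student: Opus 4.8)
The plan is to obtain the statement directly from the preceding Lemma by specialising to $n_E = 0$, after recording the elementary algebraic identity that links the two shapes of the nonlinear term.

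First I would recall that the Lemma gives
$$(-1)^{n_E + n_I}\det(A - \la I) = (c_E + \la)^{n_E}(c_I + \la)^{n_I}(c + \la)\la + \be T c_E^{n_E} p\bigl(c_I^{n_I} - (c_I + \la)^{n_I}\bigr),$$
the last summand being exactly the closed form of $\be T c_E^{n_E} p(-\la)\sum_{j=0}^{n_I - 1} c_I^{\,j}(c_I + \la)^{n_I - 1 - j}$ produced in the proof of the Lemma, via the identity $X^{n_I} - Y^{n_I} = (X - Y)\sum_{j=0}^{n_I - 1} X^{j} Y^{n_I - 1 - j}$ applied with $X = c_I + \la$ and $Y = c_I$, so that $X - Y = \la$; this is a polynomial identity, hence valid for all $\la$, including $\la = 0$.

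Next I would set $n_E = 0$. Using the empty-product conventions $(c_E + \la)^{0} = 1$ and $c_E^{0} = 1$, together with $(-1)^{n_E + n_I} = (-1)^{n_I}$, the displayed formula collapses to
$$(-1)^{n_I}\det(A - \la I) = (c_I + \la)^{n_I}(c + \la)\la + \be T p\bigl(c_I^{n_I} - (c_I + \la)^{n_I}\bigr),$$
which is the claim.

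The one point deserving care --- and essentially the only nonroutine step --- is to check that for $n_E = 0$ the matrix $A$ really is the formal $n_E = 0$ instance of the family treated in the Lemma: the eclipse block drops out, the entry $\be T$ that in general sits in the $E_1$-row and $V$-column now occupies the $I_1$-row and $V$-column, and every other entry is unchanged. Inspecting the proof of the Lemma, the auxiliary block $B$ and its determinant $\det B = p\bigl(c_I^{n_I} - (c_I + \la)^{n_I}\bigr)$ depend only on $c_I$, $p$, $v_a$ and $\la$ and are untouched by the specialisation; the cofactor expansion along the first row still yields a single nonzero off-diagonal contribution carrying the sign $(-1)^{n_I}$, while the diagonal contribution becomes $(-c_I - \la)^{n_I}(-c - \la)(-\la)$. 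Multiplying through by $(-1)^{n_I}$ then gives the stated equality. There is no genuine obstacle beyond this bookkeeping of signs and of empty-product conventions.
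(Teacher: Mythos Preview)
Your proposal is correct and follows the paper's approach: the paper gives no separate proof of the Corollary, treating it as an immediate specialisation of the preceding Lemma to $n_E = 0$, which is exactly what you do (with the added care of spelling out the empty-product conventions and the geometric-series identity linking the two forms of the $\be T p$ term). There is nothing to add.
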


\section{Results}

Recall that $c$ denotes the viral clearance rate,
$p$ denotes the virion producing rate and $T$ denotes the fraction of uninfected target cells.
We denote by $n_I$ and $n_E$ the numbers of age classes of infectious and eclipse phases and we have $c_I = n_I / \tau_I$
and $c_E = n_E / \tau_E$. We want to determine the eigenvalues of the matrix $A$.

\subsection{The case of $n_E =0$}

If we suppose that $n_E = 0$, then 
by Corollary~\ref{kov2} we have 
\begin{multline*}
(-1)^{n_I}  \det(A - \la I) =  (c_I + \la)^{n_I}(c + \la)\la + \be T p \left(c_I^{n_I} - (c_I + \la)^{n_I} \right) = \\
 (c_I + \la)^{n_I} \left( (c + \la)\la - \be T p \right) + \be T p c_I^{n_I} = 
 (c_I + \la)^{n_I} \left( \la^2 + c\la - \be T p \right) + \be T p c_I^{n_I},
\end{multline*}
denote $(c_I + \la)^{n_I} \left( \la^2 + c\la - \be T p \right)$ by $F(\la)$.  Of course $F(0) < 0$ so 
$0$ is between the  roots 
$$\frac{-c \pm \sqrt{ c^2 + 4\be T p }}{2}$$
of $\la^2 + c\la - \be T p$.
This implies for example that

The derivative of $(-1)^{n_I}  \det(A - \la I)$ is the same as the derivative of $F(\la)$, which is 
\begin{multline*}
n_I (c_I + \la)^{n_I -1} (c + \la)\la + (c_I + \la)^{n_I} \la + (c_I + \la)^{n_I} (c+\la) - \be T p n_I (c_I + \la)^{n_I -1} = \\
(c_I + \la)^{n_I -1} \left(     n_I (c + \la)\la +    (c_I + \la)(c + 2\la) - \be T p n_I    \right) = \\
(c_I + \la)^{n_I -1} \left(   (n_I + 2)\la^2 + (cn_I + c + 2c_I)\la + cc_I - \be T p n_I     \right).
\end{multline*}
Observe that 
$$(-c + \sqrt{ c^2 + 4\be T p })/2 \leq -c_I$$
is impossible since
$$
0 < c_I \leq (c - \sqrt{ c^2 + 4\be T p })/2 < 0$$
is impossible.

\subsubsection{The case of $-c_I  \leq   (-c - \sqrt{ c^2 + 4\be T p })/2$}

Suppose that 
$$
-c_I  \leq  (-c - \sqrt{ c^2 + 4\be T p })/2
$$
so $F(\la)$ has  two or three different roots, more simply
$$
c_I \geq (c + \sqrt{ c^2 + 4\be T p })/2$$
$$
2(c_I - c/2) \geq  \sqrt{ c^2 + 4\be T p },
$$
which is equivalent to 
$$
4(c_I - c/2)^2 \geq  c^2 + 4\be T p$$
because 
$
2(c_I - c/2) \leq  -  \sqrt{ c^2 + 4\be T p }
$
is not possible, so 
$$
c_I^2 - cc_I \geq  \be T p,$$
and note that this also implies that 
$$
c_I > c.$$

If $n_I$ is even, then 
$$(c_I + \la)^{n_I} \left( (c + \la)\la - \be T p \right) + \be T p c_I^{n_I}$$
has a horizontal tangency 
at 
$$
\la_a = -c_I$$
and a local minimum at 
$$
\la_b = \frac{ - c(n_I + 1) - 2c_I + \sqrt{   \left( c(n_I + 1) + 2c_I \right)^2 - 4(n_I + 2) ( cc_I - \be T p n_I) }   }{2(n_I + 2)}
$$
because 
$$\lim_{\pm \infty} F = \infty.$$
Since
$F(0) = - \be T p c_I^{n_I}$,
we have  $F(\la_2) < 0$ so it also has
 at most one  singular point  between $\la_a$ and $\la_b$.
We have that 
$$
(-1)^{n_I}  \det(A - \la I)  = F(\la) + \be T p c_I^{n_I}$$
is equal to $0$ at $\la = 0$. 
These imply that 
if $$cc_I = \be T p n_I,$$ then 
the root of $\det(A - \la I)$
is only $\la_0 = 0$, if 
$$cc_I < \be T p n_I,$$ then 
the roots are $\la_0 = 0$ and some $\la_1 > 0$ and 
if $$cc_I > \be T p n_I,$$
then 
the roots are $\la_0 = 0$ and some $-c_I < \la_1 < 0$.

If $n_I \geq 3$ is odd, then 
$\la = -c_I$ is just an inflection point and then we have the analogous result about the roots
but in every case there is another root $\la_2 < -c_I$ as well. 

\subsubsection{The case of $(-c - \sqrt{ c^2 + 4\be T p })/2 < -c_I  <  (-c + \sqrt{ c^2 + 4\be T p })/2$}

In the case of 
$$
(-c - \sqrt{ c^2 + 4\be T p })/2 < -c_I  <  (-c + \sqrt{ c^2 + 4\be T p })/2
$$
the function $F(\la)$ has again three different roots, more simply
$$
(c + \sqrt{ c^2 + 4\be T p })/2 > c_I  >  (c - \sqrt{ c^2 + 4\be T p })/2
$$
$$
 \sqrt{ c^2 + 4\be T p } > 2c_I  - c >   - \sqrt{ c^2 + 4\be T p }
$$
$$
| 2c_I - c | < \sqrt{ c^2 + 4\be T p }$$
$$
4c_I^2 - 4cc_I < 4\be T p$$
$$
c_I^2 - cc_I < \be T p.$$
If $n_I$ is even, then 
$(c_I + \la)^{n_I} \left( (c + \la)\la - \be T p \right) + \be T p c_I^{n_I}$
has two local minima
at
$$
\la_{1, 2} = \frac{ - c(n_I + 1) - 2c_I \pm  \sqrt{   \left( c(n_I + 1) + 2c_I \right)^2 - 4(n_I + 2) ( cc_I - \be T p n_I) }   }{2(n_I + 2)}
$$
with $F(\la_{1, 2})<0$ and 
a local maximum at $\la = - c_I$, where $\la_1 < -c_I < \la_2$. 
So 
$\det(A - \la I)$ has the roots $\la_0 = 0$ and  $\la_1 > 0$ if $cc_I < \be T p n_I$, 
$\la_0 = 0$ if $cc_I = \be T p n_I$ and $\la_0 = 0$ and some $\la_1 < 0$ if $cc_I > \be T p n_I$
and 
 at most two other negative roots in each cases.
 If $n_I \geq 3$ is odd, then 
 similarly to the case of $2 | n_I$
 we have $\la_0 = 0$ and a positive or negative root depending on the same conditions
  and one other negative root.

This motivates 
the following definition.

\begin{defn}
If  $c > \tau_I p \be T$, then the  system (\ref{egyenlet0}) is \emph{definite}.
If $c < \tau_I p \be T$, then the  system is
\emph{indefinite}.
\end{defn}

If the system that we have studied is definite or indefinite, then $0$ is an eigenvalue with multiplicity one. 
So we obtain the following.

\begin{prop}
Let $n_E = 0$.
If the system (\ref{egyenlet0}) is definite, then all the non-zero eigenvalues of the matrix of $\va$ is negative.
If the system is indefinite, then there is one positive eigenvalue and all the other non-zero eigenvalues are negative.
In both cases  $0$ is an eigenvalue.
\end{prop}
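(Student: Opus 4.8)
I would begin by rewriting Corollary~\ref{kov2} in factored form. Write $q:=\be T p\ge 0$, regarded as a single nonnegative parameter, and $\tau_*:=cc_I/n_I$. Since $c_I=n_I/\tau_I$ one has $\tau_I p\be T=n_I q/c_I$, so the system is definite exactly when $q<\tau_*$ and indefinite exactly when $q>\tau_*$. Because $(c_I+\la)^{n_I}-c_I^{\,n_I}=\la R(\la)$ with $R(\la):=\sum_{k=1}^{n_I}\binom{n_I}{k}c_I^{\,n_I-k}\la^{k-1}$ a degree-$(n_I-1)$ polynomial with positive coefficients (and $R(0)=n_I c_I^{\,n_I-1}$), Corollary~\ref{kov2} becomes
$$
(-1)^{n_I}\det(A-\la I)\;=\;\la\, Q(\la),\qquad Q(\la):=(\la+c)(\la+c_I)^{n_I}-q\,R(\la),
$$
where $Q$ has degree $n_I+1$ and leading coefficient $1$. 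So the nonzero eigenvalues of the matrix of $\va$ are exactly the roots of $Q$, and $Q(0)=c_I^{\,n_I-1}(cc_I-n_I q)\ne0$ in both cases, confirming that $0$ is a simple eigenvalue. I read ``negative'' as ``with negative real part'' (for generic parameters $Q$ has nonreal roots), so the target is: $q<\tau_*\Rightarrow$ all roots of $Q$ lie in $\{\mathrm{Re}<0\}$; and $q>\tau_*\Rightarrow$ exactly one root of $Q$ lies in $\{\mathrm{Re}>0\}$, it is real (hence positive), and the other $n_I$ roots lie in $\{\mathrm{Re}<0\}$. My plan is to follow the $n_I+1$ roots of $Q$ as $q$ runs over $[0,\infty)$.

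The crux is: \emph{for every $q\ge0$ the only possible root of $Q$ on the imaginary axis is $\la=0$, and that occurs only for $q=\tau_*$.} Indeed, the factored form gives the identity $\la Q(\la)=(\la+c_I)^{n_I}(\la^2+c\la-q)+q\,c_I^{\,n_I}$; if $Q(i\om)=0$ with $\om\in\R$, substituting $\la=i\om$ yields $(c_I+i\om)^{n_I}\bigl(-(\om^2+q)+ic\om\bigr)=-q\,c_I^{\,n_I}$, and taking moduli squared,
$$
(c_I^2+\om^2)^{n_I}\bigl((\om^2+q)^2+c^2\om^2\bigr)\;=\;q^2 c_I^{\,2n_I}.
$$
On the left each factor is at least its value at $\om=0$ (that is, $c_I^{\,2n_I}$ and $q^2$), and the first inequality is strict once $\om\ne0$; hence the left side strictly exceeds the right for $\om\ne0$, which is absurd. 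So $\om=0$, and $Q(0)=c_I^{\,n_I-1}(cc_I-n_I q)$ vanishes precisely when $q=\tau_*$. I expect spotting this reformulation to be the main obstacle: it is what turns the question into a one-line estimate and, unlike the real-root bookkeeping of \S3.1, it controls the complex eigenvalues as well.

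Granting the crux, I would run the following continuity argument. The roots of $Q$, with multiplicity, depend continuously on $q$, so the number $N(q)$ of them in the open right half-plane is constant on every $q$-interval free of imaginary roots of $Q$; by the crux the sole exceptional value is $q=\tau_*$. At $q=0$, $Q(\la)=(\la+c)(\la+c_I)^{n_I}$ has only the roots $-c,-c_I$, so $N(0)=0$; hence $N\equiv0$ on $[0,\tau_*)$ and there $Q$ has no imaginary root, giving the definite case. At $q=\tau_*$ a short computation gives $Q'(0)=\tfrac12c_I^{\,n_I-1}(cn_I+c+2c_I)>0$, so $\la=0$ is a simple root of $Q$, while its other $n_I$ roots lie in $\{\mathrm{Re}<0\}$ (they are limits as $q\uparrow\tau_*$ of left half-plane roots and, by the crux, not imaginary). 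The implicit function theorem then gives for the branch $\la(q)$ with $\la(\tau_*)=0$ that $d\la/dq=R(0)/Q'(0)=2n_I/(cn_I+c+2c_I)>0$ at $q=\tau_*$, so this root passes into $\{\mathrm{Re}>0\}$ as $q$ crosses $\tau_*$. Therefore $N(q)=1$ just above $\tau_*$, and $N\equiv1$ on $(\tau_*,\infty)$. Finally, as $Q$ has real coefficients a single root with $\mathrm{Re}>0$ cannot be one of a conjugate pair, so it is real and positive, and the other $n_I$ nonzero eigenvalues have negative real part. Together with the multiplicity-one statement for the eigenvalue $0$, this is the Proposition.

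The remaining work is routine but should be written out: that $N(q)$ is locally constant (continuity of roots, plus the argument-principle fact that a root can move between the open half-planes only through $i\R$), and that the crossing at $q=\tau_*$ is transversal and involves only the single simple root $\la=0$; one must also keep the small computations ($Q(0)$, $Q'(0)$ at $q=\tau_*$, $R(0)$, and the sign of $d\la/dq$) mutually consistent, which I have checked. As an alternative to parts of this one can note that for $T>0$ the matrix $A$ is Metzler with irreducible infectious block, so its spectral abscissa is a simple real eigenvalue; but the imaginary-axis claim is still needed to locate the rest of the spectrum, so that route is no shorter.
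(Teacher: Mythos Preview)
Your proof is correct and takes a genuinely different route from the paper's. The paper's own ``proof'' is the one-liner ``the statement follows from the previous arguments,'' pointing back to the real-variable analysis of \S3.1: there the function $F(\la)=(c_I+\la)^{n_I}(\la^2+c\la-\be Tp)$ is studied on $\R$, its critical points are located from the factored derivative, the cases $n_I$ even/odd and $c_I^2-cc_I\gtrless\be Tp$ are treated separately, and the real zeros of $F(\la)+\be Tp\,c_I^{n_I}$ are read off from the resulting graph. That argument produces exactly Tables~\ref{tablazat_1} and~\ref{tablazat_2}, i.e.\ the signs of the \emph{real} eigenvalues.

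You instead factor $(-1)^{n_I}\det(A-\la I)=\la Q(\la)$ and run a homotopy in $q=\be Tp$, the key step being the clean modulus estimate showing $Q$ has no purely imaginary root except $\la=0$ at the single threshold $q=\tau_*$. Your approach has two advantages: it avoids the even/odd and $c_I$-versus-quadratic case split entirely, and it controls the complex eigenvalues as well, so that under your reading of ``negative'' as ``negative real part'' you actually prove a stronger statement than the paper's graph analysis does. The paper's approach, on the other hand, gives finer information about the \emph{location} of the real roots relative to $-c_I$ (cf.\ the captions to the tables), which your argument does not directly yield. Your computations of $Q(0)$, $Q'(0)$ at $q=\tau_*$, and $d\la/dq$ all check out, and the transversality/simple-crossing discussion at $q=\tau_*$ is the right way to justify that exactly one root moves across.
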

\begin{proof}
The statement follows from the previous arguments. 
\end{proof}

\begin{table}[h]
\begin{center}
\begin{tabular}{|c|c|c|c|}
\hline
$c < \be T p \tau_I$ & $0, +$ & $0, +$ & $(-, -), 0, + $  \\
\hline
$c = \be T p \tau_I$ & $0$ & $0$ & $(-, -), 0$  \\
\hline
$c > \be T p \tau_I$ & $-, 0$ & $-, 0$ & $(-, -), -, 0$    \\
\bottomrule
  & $c_I^2 - cc_I > \be T p$ & $c_I^2 - cc_I = \be T p$ & $c_I^2 - cc_I < \be T p$   \\
\hline
\end{tabular}
\end{center}
\caption{The signs of the possible different eigenvalues of $A$ in the case of $2 | n_I$. The ``$-$'' signs in the brackets are optional and 
represent negative numbers which are smaller than $-c_I$. The other ``$-$'' signs represent negative numbers which are bigger than $-c_I$.}
\label{tablazat_1}
\end{table}

\begin{table}[h]
\begin{center}
\begin{tabular}{|c|c|c|c|}
\hline
$c < \be T p \tau_I$ & $-, 0, +$ & $-, 0, +$ & $-, 0, + $  \\
\hline
$c = \be T p \tau_I$ & $-, 0$ & $-, 0$ & $-, 0$  \\
\hline
$c > \be T p \tau_I$ & $-, -, 0$ & $-, -, 0$ & $-, -, 0$    \\
\bottomrule
  & $c_I^2 - cc_I > \be T p$ & $c_I^2 - cc_I = \be T p$ & $c_I^2 - cc_I < \be T p$   \\
\hline
\end{tabular}
\end{center}
\caption{The signs of the possible different  eigenvalues of $A$ in the case of odd $n_I$. 
 The ``$-$'' signs in the first and second  rows represent negative numbers which are smaller than $-c_I$
 and in the third row the ``$-$'' signs represent
 a negative number which is smaller than $-c_I$ and a 
 negative number which is bigger than $-c_I$.}
\label{tablazat_2}
\end{table}

\begin{prop}
The algebraic multiplicity 
of all the real eigenvalues is one, except possibly for the negative eigenvalues in the brackets in the case of $2 | n_I$ and for the eigenvalue $0$ 
in the case of $c = \be T p \tau_I$.
\end{prop}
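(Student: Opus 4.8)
The plan is as follows. Write $P(\la):=(-1)^{n_I}\det(A-\la I)$; by Corollary~\ref{kov2} and the expansion and differentiation already carried out above,
\[
P(\la)=(c_I+\la)^{n_I}\bigl(\la^2+c\la-\be T p\bigr)+\be T p\,c_I^{n_I},\qquad
P'(\la)=(c_I+\la)^{n_I-1}Q(\la),
\]
where $Q(\la)=(n_I+2)\la^2+(cn_I+c+2c_I)\la+(cc_I-\be T p\,n_I)$. The real eigenvalues of the matrix of $\va$ are the real roots of $P$, and such a root $\la_0$ has algebraic multiplicity at least two exactly when $P(\la_0)=P'(\la_0)=0$. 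The first step is to observe that $P(-c_I)=\be T p\,c_I^{n_I}>0$, so $-c_I$ is never a root of $P$; hence every multiple root of $P$ is a root of the quadratic $Q$, and there are at most two of them. I would then record the two evaluations $Q(0)=cc_I-\be T p\,n_I$ and $Q(-c_I)=n_I\bigl(c_I^2-cc_I-\be T p\bigr)$: using $\tau_Ic_I=n_I$, the first shows that $0$ is a root of $Q$, hence a potential multiple root of $P$, exactly when $c=\be T p\,\tau_I$, and the second ties the sign of $c_I^2-cc_I-\be T p$ (the quantity that indexes the columns of Tables~\ref{tablazat_1} and \ref{tablazat_2}) to the location of $-c_I$ relative to the two roots of $Q$.

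Next I would study $P$ on the half-lines $(-c_I,\infty)$ and $(-\infty,-c_I)$ separately, using that $\operatorname{sign}P'=\operatorname{sign}Q$ on the first and $\operatorname{sign}P'=(-1)^{n_I-1}\operatorname{sign}Q$ on the second. On $(-c_I,\infty)$ the function $P$ has at most two critical points (the roots of $Q$ that lie there), hence at most a local maximum followed by a local minimum; if a local maximum occurs at a point $\mu>-c_I$, then $Q>0$ on $(-c_I,\mu)$ forces $P$ to increase there, so $P(\mu)>P(-c_I)>0$, and moreover $P$ cannot be increasing on all of $(-c_I,\infty)$, since that would give $P>0$ on $(-c_I,\infty)$, contradicting $P(0)=0$. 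It follows that $P$ has at most two roots, counted with multiplicity, on $(-c_I,\infty)$, one of them being $0$; hence any root of $P$ in $(-c_I,\infty)$ other than $0$ is simple, and $0$ is simple unless it coincides with the local minimum of $P$ there, that is, unless $Q(0)=0$, that is, unless $c=\be T p\,\tau_I$.

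On $(-\infty,-c_I)$ the parity of $n_I$ matters. If $n_I$ is odd then $n_I-1$ is even, so again $\operatorname{sign}P'=\operatorname{sign}Q$; here $P(-\infty)=-\infty$, and when $Q$ has a root $\mu<-c_I$ (which, by the sign of $Q(-c_I)$, happens precisely when $c_I^2-cc_I<\be T p$) the point $-c_I$ lies strictly between the two roots of $Q$, so $P$ is strictly decreasing on $(\mu,-c_I)$ and $P(\mu)>P(-c_I)>0$; therefore $P$ has exactly one, simple, root on $(-\infty,-c_I)$, and the same holds, more easily, when $Q$ has no root below $-c_I$. If $n_I$ is even then $n_I-1$ is odd and $P(-\infty)=+\infty$; when $c_I^2-cc_I\ge\be T p$ the function $P$ decreases strictly from $+\infty$ to $P(-c_I)>0$ on $(-\infty,-c_I)$ and has no root there, and when $c_I^2-cc_I<\be T p$ it has a local minimum at a point $\mu<-c_I$, so that, according to the sign of $P(\mu)$, it has two simple roots, one double root at $\mu$, or no root on $(-\infty,-c_I)$---the middle case being exactly the two bracketed negative eigenvalues of Table~\ref{tablazat_1} (those below $-c_I$) coalescing. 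Putting the two half-line analyses together yields the statement: every real eigenvalue is simple, except possibly the negative eigenvalue below $-c_I$ in the case $2\mid n_I$ (when the above local minimum value vanishes) and the eigenvalue $0$ in the case $c=\be T p\,\tau_I$.

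The step I expect to be the main obstacle is the odd-$n_I$ part of the last paragraph: one has to show that the extremum of $P$ just below $-c_I$ is a strict \emph{maximum} with positive value, so that it cannot be a root, and this rests on correctly placing the roots of $Q$ relative to $-c_I$ and $0$ in each column of the tables rather than on any single formal identity. The even-$n_I$ case is easier precisely because that extremum is a minimum, which is why the bracketed eigenvalues are the only place where a double real root can hide.
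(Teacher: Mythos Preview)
Your argument is correct and follows essentially the same route as the paper, whose proof of this proposition is literally ``The statement follows from the previous arguments,'' referring back to the analysis in \S3.1.1--3.1.2 that uses the very same factorisation $P'(\la)=(c_I+\la)^{n_I-1}Q(\la)$ and the same dichotomies on the signs of $cc_I-\be T p\,n_I$ and $c_I^2-cc_I-\be T p$. Your write-up is in fact cleaner than the paper's: by splitting the real line at $-c_I$ and using $P(-c_I)=\be T p\,c_I^{n_I}>0$ as a barrier, you avoid the paper's separate case split on where $-c_I$ sits relative to the roots of $\la^2+c\la-\be T p$, and the ``obstacle'' you flag in the odd case is disposed of exactly as you describe, so there is no gap.
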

\begin{proof}
The statement follows from the previous arguments. 
\end{proof}

\begin{prop}
The geometric  multiplicity 
of every real eigenvalue is one.
\end{prop}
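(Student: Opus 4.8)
The plan is to prove the equivalent statement that $\operatorname{rank}(A-\la I)\ge n_I+1$ for every real eigenvalue $\la$ of the matrix $A\in\R^{(n_I+2)\x(n_I+2)}$ of $\va$ in the case $n_E=0$, whose rows and columns I index by $I_1,\dots,I_{n_I},V,W$. Since $\det(A-\la I)=0$ at an eigenvalue, the inequality $\operatorname{rank}(A-\la I)\ge n_I+1$ forces $\operatorname{rank}(A-\la I)=n_I+1$, so the eigenspace has dimension $(n_I+2)-(n_I+1)=1$, which is the claim. I use throughout the nondegeneracy assumptions $\be T\ne0$ and $v_a\ne0$ (recall also $c_I=n_I/\tau_I>0$); the condition $v_a\ne0$ is genuinely needed, since for $v_a=0$ and $c=\be T p\tau_I$ the eigenvalue $0$ has geometric multiplicity two. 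One cannot use the obvious $(n_I+1)\x(n_I+1)$ submatrix obtained by deleting the last row and column, because $\det(A-\la I)$ equals $-\la$ times that minor, so it vanishes at every nonzero eigenvalue; the construction must instead exploit the entry $\be T$ in the $I_1$-row and $V$-column, the entry $v_a$ in the $V$-row and $W$-column, and the entry $-\la$ in the $W$-row.

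For $\la\ne0$ I would take the submatrix of $A-\la I$ on all rows except the $V$-row and all columns except the $I_{n_I}$-column. Expanding its determinant along the $W$-row, whose only surviving entry is the $-\la$ in the $W$-column, reduces it to the $n_I\x n_I$ minor on rows $I_1,\dots,I_{n_I}$ and columns $I_1,\dots,I_{n_I-1},V$. Expanding that one along the $V$-column, whose only surviving entry is the $\be T$ in the $I_1$-row, reduces it to the $(n_I-1)\x(n_I-1)$ minor on rows $I_2,\dots,I_{n_I}$ and columns $I_1,\dots,I_{n_I-1}$, which is upper triangular with every diagonal entry $c_I$. Hence the chosen submatrix has determinant $\pm\,\la\,\be T\,c_I^{\,n_I-1}\ne0$. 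This remains valid even when $\la=-c_I$, since the surviving diagonal entries are the $c_I$'s and never the entries $-c_I-\la$.

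For $\la=0$ the $W$-row of $A$ vanishes, so I would instead take the submatrix on all rows except the $W$-row and all columns except the $V$-column. Deleting the $V$-column removes the $\be T$ entry, and the remaining matrix is block lower triangular: an $n_I\x n_I$ lower bidiagonal block with $-c_I$ on the diagonal, a zero column above the entry $v_a$, and $v_a$ in the lower right corner. Its determinant is $(-c_I)^{n_I}v_a\ne0$.

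In either case $\operatorname{rank}(A-\la I)\ge n_I+1$, and together with $\det(A-\la I)=0$ this gives equality, so every real eigenvalue of $A$ has geometric multiplicity one. I expect the only fiddly parts to be the bookkeeping of the cofactor signs and making the hypotheses $\be T\ne0$, $v_a\ne0$ explicit; there is no genuine obstacle, since the two submatrices are designed precisely so that the vanishing factor $\det(A-\la I)$ never enters the computation. As a byproduct the argument shows that the eigenvalues of algebraic multiplicity two occurring in Tables~\ref{tablazat_1} and~\ref{tablazat_2} --- the bracketed ones below $-c_I$ when $2\mid n_I$, and $\la=0$ when $c=\be T p\tau_I$ --- are genuinely non-semisimple.
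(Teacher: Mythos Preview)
Your proof is correct and takes a genuinely different route from the paper's. The paper solves the system $(A-\la I)x=0$ explicitly: for $\la\ne0$ the last equation forces $W=0$, the first $n_I$ equations then determine $I_k=\dfrac{c_I^{k-1}}{(c_I+\la)^k}\be TV$ in terms of the free parameter $V$, and the remaining $V$-row equation is automatically satisfied because $\la$ is an eigenvalue; for $\la=0$ one again has $V$ free, all $I_k$ equal, and $W$ determined by the $V$-row. Your rank argument via well-chosen $(n_I+1)\times(n_I+1)$ minors is cleaner in one respect: it does not need the side fact that $\la=-c_I$ is never an eigenvalue, which the paper's parametrization tacitly uses when dividing by $c_I+\la$. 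On the other hand, the paper's direct solution yields the eigenvectors explicitly as a byproduct, which it records in the subsequent corollary and feeds into the decomposition $\mathbb V_T^-\oplus\mathbb V_T^+\oplus\mathbb V_T^0$; your method settles the multiplicity question but would require a separate step to recover those eigenvectors. Your care in making the hypotheses $\be T\ne0$ and $v_a\ne0$ explicit---and your remark that the latter is genuinely needed at $\la=0$ when $c=\be Tp\tau_I$---is an improvement over the paper's presentation.
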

\begin{proof}
We have to solve the equation
$$
(A - \la I)x  = 0,
$$
which is just 
$$
\begin{array}{ccc}
 (-c_I - \la)I_1 + \be TV = 0, \\
  c_I I_1 + (-c_I - \la)I_2 = 0, \\
  \ldots \\
  c_I I_{n_I - 1} + (-c_I - \la)I_{n_I} = 0, \\
  p\sum_{i=1}^{n_I} I_i + (-c - \la)V + v_a W = 0, \\
  -\la W = 0.
 \end{array}
$$
This implies $W = 0$ and since
substituting 
the first $n_I$ equations into $$p\sum_{i=1}^{n_I} I_i + (-c - \la)V = 0$$
results $\det ( A - \la I) = 0$, which can be seen after a short computation, the solution of  
$(A - \la I)x  = 0$ is 
$$
W = 0, \mbox{\ \ \ }V \in \R,  \mbox{\ \ \ } I_k = \frac{c_I^{k-1}}{(c_I + \la)^k}\be T V,$$ 
which means that the geometric multiplicity is equal to $1$ 
for $\la \neq 0$.
If $\la = 0$, then 
the solution is 
$$
V \in \R,  \mbox{\ \ \ } I_1 = \be T V /c_I,  \mbox{\ \ \ } I_k = I_{k-1},  \mbox{\ \ \ }
p\sum_{i=1}^{n_I} I_i -c V + v_a W= 0$$
so 
the geometric multiplicity is equal to $1$.
\end{proof}

\begin{cor}
The eigenvectors for $\la \neq 0$ are
$$
\left[   \frac{1}{c_I + \la}\be T V,  \frac{c_I}{(c_I + \la)^2}\be T V,  \ldots, \frac{c_I^{n_I-1}}{(c_I + \la)^{n_I}}\be T V,    V, 0 \right],
$$
where $V \in \R$ and for $\la = 0$ they are
$$
\left[   \frac{1}{c_I}\be T V,  \frac{1}{c_I}\be T V,  \ldots, \frac{1}{c_I} \be T V,    V,   \frac{\be Tp \tau_I - c}{-v}  V  \right],
$$
where $V \in \R$. 
\end{cor}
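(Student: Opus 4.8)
The plan is to read both formulas directly off the computation already carried out in the proof of the preceding Proposition, where the solution set of $(A - \la I)x = 0$ was completely determined; the Corollary only records those solutions in closed form after evaluating the sums that occur. For $\la \neq 0$ that proof gives $W = 0$, $V \in \R$ free, and $I_k = \frac{c_I^{k-1}}{(c_I + \la)^k}\be T V$, which is precisely the vector asserted, so strictly speaking nothing further is required. If one wants a self-contained derivation, I would argue instead as follows: the first equation of $(A - \la I)x = 0$ reads $(-c_I - \la)I_1 + \be T V = 0$, hence $I_1 = \frac{\be T V}{c_I + \la}$; for $2 \le k \le n_I$ the $k$-th equation reads $c_I I_{k-1} + (-c_I - \la)I_k = 0$, i.e. $I_k = \frac{c_I}{c_I + \la}I_{k-1}$, and a one-line induction yields $I_k = \frac{c_I^{k-1}}{(c_I + \la)^k}\be T V$, matching the claimed entries. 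The last equation $-\la W = 0$ forces $W = 0$ since $\la \neq 0$, and the remaining equation $p\sum_{i=1}^{n_I} I_i + (-c - \la)V + v_a W = 0$ is automatically satisfied, because after the above substitutions it is equivalent to $\det(A - \la I) = 0$, which holds as $\la$ is an eigenvalue.

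For $\la = 0$ the same proof gives $I_1 = \be T V / c_I$ and $I_k = I_{k-1}$, so all the $I_k$ equal $\frac{\be T V}{c_I}$ with $V$ free, and the only remaining relation is $p\sum_{i=1}^{n_I} I_i - cV + v_a W = 0$. I would finish by substituting $\sum_{i=1}^{n_I} I_i = n_I \cdot \frac{\be T V}{c_I}$ and using $c_I = n_I/\tau_I$, so that $n_I/c_I = \tau_I$; the relation becomes $p \tau_I \be T V - cV + v_a W = 0$, whence $W = \frac{\be T p \tau_I - c}{-v_a}\,V$, which is the stated last coordinate.

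The computation is entirely routine; the only spot that calls for a moment's care is the $\la = 0$ case, where one has to remember the normalization $c_I = n_I/\tau_I$ in order to recognize the coefficient of $V$ in the $W$-slot in the form $\frac{\be T p \tau_I - c}{-v_a}$ (and to note that this uses $v_a \neq 0$, so the relation genuinely expresses $W$ in terms of $V$). I do not expect any real obstacle: the Corollary is just an explicit restatement of the eigenvector computation contained in the proof of the geometric-multiplicity Proposition, with the finite sums evaluated.
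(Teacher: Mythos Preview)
Your proposal is correct and matches the paper's approach exactly: the paper states this result as a Corollary with no separate proof, since the eigenvector formulas are read off directly from the solution of $(A-\la I)x=0$ computed in the proof of the preceding Proposition. Your self-contained derivation (the one-line induction for $I_k$ when $\la\neq 0$, and the substitution $n_I/c_I=\tau_I$ to solve for $W$ when $\la=0$) is precisely the computation that the paper leaves implicit.
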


In the following, suppose that $n_E = 0$.  
According to the previous arguments by parametrizing everything by $T$ 
let us write and denote the space $\R^{n_I + 3}$ as
$$
\R \oplus  \R^{n_I + 2} =  \mathbb V \oplus \mathbb V^{-}_T \oplus \mathbb V^+_T \oplus \mathbb V^0_T,
$$
where the first $\mathbb V$ summand stands for the variable $T$, the summand $\mathbb V^-_T$ is the eigenspace for the negative eigenvalues,
 the summand $\mathbb V^+_T$ is the eigenspace for the positive eigenvalues
 and  the last 
$\mathbb V^0_T$ summand stands for
the  eigenvalue $0$.
Then we get the vector field
$$\Phi_{T,   I_1, \ldots, I_{n_I},  V, W} \in \R^{n_I + 3}$$
by 
computing at first the vector field 
$$
\va(  I_1, \ldots, I_{n_I},  V, W) \in \mathbb V^{-}_T \oplus \mathbb V^+_T \oplus \mathbb V^0_T$$
and then by taking the 
vector field
$$
\va(  I_1, \ldots, I_{n_I},  V, W) + 
(0, \ldots, 0, D_{PCF} a, 0 ) + (0, \ldots, 0, \psi).
$$
This last addition modifies 
$\va(  I_1, \ldots, I_{n_I},  V, W)$ only in its last two coordinates in $\R^{n_I + 2}$.
After this we add 
 $-\be TV$ in the first coordinate $\R$ in $\R \oplus \R^{n_I + 2}$.
 
 The map $\va$ has negative eigenvalues in the direction $\mathbb V^{-}_T$, an eigenvalue 
 $0$ in the direction of $\mathbb V^0_T$ 
  and a positive  eigenvalue  in the direction of $\mathbb V_T^+$ depending 
  on whether the state of the system is definite or indefinite (the presence of the positive eigenvalue changes at 
 $T = \frac{c}{\tau_I p \be}$).
The last coordinate of the vector field 
 $\va(  I_1, \ldots, I_{n_I},  V, W)$  is equal to $0$. 
 By omitting $\mathbb V^0_T$ 
 it is easy to sketch this vector field depending on the value of $T$,  see Figure~\ref{kuszob}.

\begin{figure}[h!]
\begin{center}
\epsfig{file=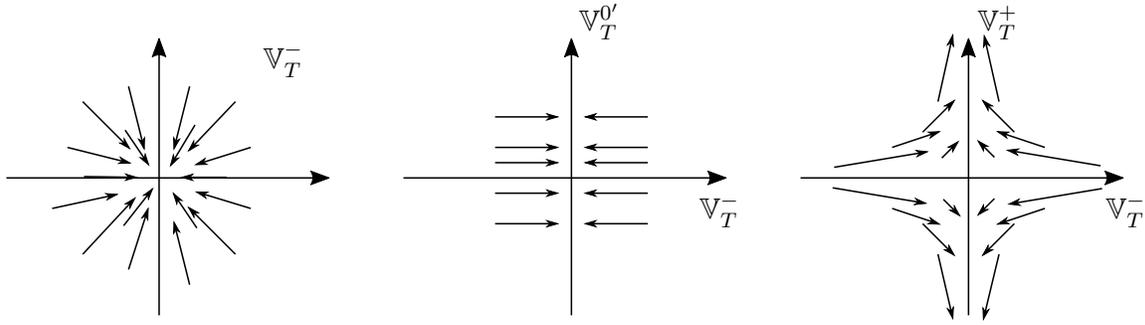, height=3.9cm}
\put(-11.5, 3.4){$\mathbb V^-_T$}
\put(-5.7, 1.4){$\mathbb V^-_T$}
\put(-0.3, 1.4){$\mathbb V^{-}_T$}
\put(-7.3, 3.9){$\mathbb V^{0'}_T$}
\put(-2, 3.9){$\mathbb V^+_T$}
\end{center} 
\caption{The sketch of the vector fields for $T < \frac{c}{\tau_I p \be}$, $T = \frac{c}{\tau_I p \be}$ and $T > \frac{c}{\tau_I p \be}$, respectively, 
from the left to the right hand side.
The subspace $\mathbb V^{0'}_T$ is a component of  $\mathbb V^0_T$.}
\label{kuszob}
\end{figure}

When we add $(0, \ldots, 0, D_{PCF} a, 0 )$, all the vector fields are modified by
the constant $D_{PCF} a$ in one coordinate  direction. 
%
%
%
Then in the space $\mathbb V^{ n_I} \oplus \mathbb V^2 = \R^{n_I + 2}$
we add $(0, \ldots, 0, \psi)$ to the previously constructed vector field.
If the value of $|\psi |= |\del_{xt} V(x, t)|$ is small, then 
adding it does not modify the picture too much. 

%
%
%
%

\end{document}